\definecolor{Gray}{gray}{0.85}
\definecolor{LightCyan}{rgb}{0.88,1,1}
\newtheorem{theorem}{Theorem}[section]
\newtheorem{lemma}[theorem]{Lemma}
\newtheorem{proposition}[theorem]{Proposition}
\theoremstyle{definition}
\newtheorem{definition}[theorem]{Definition}
\newtheorem{example}[theorem]{Example}
\theoremstyle{remark}
\newtheorem{remark}[theorem]{Remark}
\numberwithin{equation}{section}
\newcolumntype{C}[1]{>{\centering\arraybackslash}p{#1}}
\newcolumntype{L}[1]{>{\RaggedRight\arraybackslash}p{#1}}
\newcommand{\thickhline}{%
	\noalign {\ifnum 0=`}\fi \hrule height 1pt
	\futurelet \reserved@a \@xhline
}
\newcolumntype{"}{@{\hskip\tabcolsep\vrule width 1pt\hskip\tabcolsep}}
\newcommand\blfootnote[1]{%
	\begingroup
	\renewcommand\thefootnote{}\footnote{#1}%
	\addtocounter{footnote}{-1}%
	\endgroup
}
\def\ps@pprintTitle{%
	\let\@oddhead\@empty
	\let\@evenhead\@empty
	\def\@oddfoot{\reset@font\hfil\thepage\hfil}
	\let\@evenfoot\@oddfoot
}
\begin{document}

\begin{frontmatter}

\title{Goppa code and quantum stabilizer codes from plane  curves given by separated polynomials}

\author{Vahid Nourozi$^{a,*}$ and Farzaneh Ghanbari$^{b}$}



\address[mysecondaryaddress]{The Klipsch School of Electrical and Computer Engineering, New Mexico State University,
Las Cruces, NM 88003 USA}
\address[mymainaddress]{Department of Pure Mathematics, Faculty of Mathematical Sciences,
 Tarbiat Modares University, P.O.Box:14115-134, Tehran, Iran.}


\blfootnote{Email address: nourozi@nmsu.edu}
\cortext[mycorrespondingauthor] {Corresponding author}

\begin{abstract}
In this paper, we examine algebraic-geometric (AG) codes associated with curves generated by separated polynomials, and we create AG codes and quantum stabilizer codes from these curves by varying their parameters. Our research involves a thorough examination of the curves' algebraic features as well as the creation of Goppa codes over them. Extending these findings, we create quantum stabilizer codes, revealing that quantum codes built from Hermitian self-orthogonal AG codes have acceptable parameters, improving the reliability and performance of communication networks.
\end{abstract}

\begin{keyword}
\texttt{Goppa code, Finite fields, algebraic geometry codes, quantum stabilizer codes, plane curves given by separated polynomials.}
\end{keyword}

\end{frontmatter}

\section{Introduction}
After Goppa’s construction \cite{77tores}, ideas from algebraic geometry proved influential in coding theory. He proposed the brilliant concept of connecting code $C$ to a (projective, geometrically irreducible, non-singular, algebraic) curve $\mathcal{X}$ defined over $\mathbb{F}_q$, a finite field with $q$ elements. This code comprises two divisors, $D$ and $G$, with one of them, $D$, being the sum of $n$ distinct $\mathbb{F}_q$ rational points of $\mathcal{X}$. It becomes out that the minimum distance $d$ of $C$ satisfies
$$d \geq n - \mbox{deg}(G).$$
This is one of the key aspects of Goppa’s work. In general, there is no known lower bound for the minimum distance of an arbitrary code. This bound is only relevant if $n$ is sufficiently large. Since $n$ is upper bounded by the Hasse-Weil upper bound

$$1 + q + 2g\sqrt{q},$$
where $g$ is the genus of the underlying curve; it is of significant interest to study curves with numerous rational points; see \cite{66tores} and \cite{33tores}.

$AG$ Codes from the Hermitian curve have been extended in numerous studies; see \cite{10maria,24maria,25maria,26maria,44maria,46maria,47maria}.
In addition, a family of Hermitian self-orthogonal classical codes are derived from the algebraic geometry codes developed in
\cite{44jin,55jin,66jin}. Also, Vahid introduced the Goppa code from Hyperelliptic Curve \cite{aut, N23, miss, auta, shiraz}, from plane curves given by separated polynomials \cite{ M23, behsep, Nourozi2024, code, esfahan}, and he explained them in his Ph.D. dissertation in \cite{phd}. Optimization frameworks are instrumental in addressing complex challenges across disciplines, including power systems and quantum coding theory. Refs \cite{hamid, hamid1} utilize mixed-integer programming to explore trade-offs in resource allocation, emphasizing the balance between operational efficiency and cost in ancillary service markets. Similarly, Ref \cite{hamid2} introduces robust optimization techniques to address reserve deliverability under uncertainty, showcasing innovative methods to simplify computational complexity while preserving system reliability. These works demonstrate how optimization-based approaches manage trade-offs between performance metrics and constraints, a concept central to power systems and robust quantum systems design.

Section \ref{se33} introduces basic notions and preliminary results of $AG$ codes and plane curves given by separated polynomial curves. Section \ref{sec44} presents the quantum Goppa code on curve $\mathcal{X}$.

\section{Preliminary}\label{s2}
\subsection{Curves given by separated polynomials}
In this paper $\mathcal{X}$ is a plane curve defined over the algebraic closure $K$ of a prime finite field $\mathbb{F}_q$ by an equation
\begin{equation*}
A(Y)=B(X),
\end{equation*}
satisfies the following conditions:
\begin{itemize}
\item[(1)] $\mbox{deg}(\mathcal{X})\geq 4$;
\item[(2)] $A(Y)= a_nY^{p^n}+a_{n-1}Y^{p^n-1}+\cdots +a_0Y, \hspace{0.3cm} a_j \in K, \hspace{0.3cm} a_0,a_n \neq 0$;
\item[(3)] $B(X)=b_mX^m+b_{m-1}X^{m-1}+\cdots + b_1X + b_0, \hspace{0.3cm} b_j \in K,\hspace{0.3cm}b_m \neq 0$;
\item[(4)] $m \not\equiv 0 \hspace{0.3cm} (\mbox{mod} p)$;
\item[(5)] $n\geq 1, \hspace{0.3cm} m\geq 2$.
\end{itemize}
Note that $(2)$ occurs if and only if $A(Y +a) = A(Y )+A(a)$ for every $a \in K$, that is, the polynomial $A(Y)$ is additive. The basic properties of $\mathcal{X}$ are presented in the following lemmas; see [\cite{tores}, Section 12.1].

\begin{lemma}
Curve $\mathcal{X}$ is an irreducible plane curve with a maximum of one singular point.
\begin{itemize}
\item[(i)] If $\mid m - p^n \mid =1$, then $\mathcal{X}$ is non-singular.
\item[(ii)] $\mathcal{X}$ has genus $g=\frac{(p^n-1)(m-1)}{2}$.
\end{itemize}
\end{lemma}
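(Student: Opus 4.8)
The plan is to handle the projective model of $\mathcal{X}$ by analysing separately its affine part and its behaviour along the line at infinity, and then to read off the genus from the Riemann--Hurwitz formula for the covering $x\colon\mathcal{X}\to\mathbb{P}^1$. Write $F(X,Y)=A(Y)-B(X)$. Since $A$ is additive, $A'(Y)=a_0\in K^{\ast}$, so $\partial F/\partial Y=a_0$ vanishes nowhere; hence $\mathcal{X}$ is smooth at every affine point and any singularity must lie on the line $Z=0$. As $m\not\equiv 0\pmod p$ while $p^n\equiv 0\pmod p$ we have $m\neq p^n$, so $d:=\deg\mathcal{X}=\max\{m,p^n\}$; homogenizing $F$ to degree $d$ and putting $Z=0$ kills every monomial carrying a positive power of $Z$ and leaves only $b_mX^m$ (when $m>p^n$) or $a_nY^{p^n}$ (when $m<p^n$). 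Either way $\mathcal{X}$ meets the line at infinity in a single point $Q$, namely $[0:1:0]$ or $[1:0:0]$ respectively; in particular $\mathcal{X}$ has at most one singular point, which can only be $Q$.

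For irreducibility, regard $F$ as a polynomial in $Y$ over $K(X)$, with leading coefficient $a_n\in K^{\ast}$ and degree $p^n$, and let $y$ be a root in $\overline{K(X)}$; note $y$ is separable over $K(X)$ since $\partial F/\partial Y=a_0\neq 0$. If $P$ lies over the place $v_\infty$ of $K(X)$ at $X=\infty$ with ramification index $e$, then $y$ must have a pole at $P$ (otherwise $v_P(A(y))\geq 0$, contradicting $v_P(B(x))=-me<0$), whence $p^n v_P(y)=v_P(A(y))=v_P(B(x))=-me$; as $v_P(y)\in\mathbb{Z}$ and $\gcd(m,p^n)=1$ this forces $p^n\mid e$. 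But $e\leq[K(X)(y):K(X)]\leq\deg_Y F=p^n$, so $e=p^n=[K(X)(y):K(X)]$, i.e.\ $F$ equals $a_n$ times the minimal polynomial of $y$ and hence is irreducible over $K(X)$; by Gauss's lemma it is irreducible over $K$, so $\mathcal{X}$ is an irreducible plane curve. For part~(i), assume $\lvert m-p^n\rvert=1$: on the degree-$d$ homogenization of $F$ one checks that the $X$- and $Y$-partial derivatives vanish at $Q$ (using $m\geq 2$, $n\geq 1$ and additivity of $A$), while the $Z$-partial is nonzero precisely because $\lvert m-p^n\rvert=1$ lets the monomial attached to $a_n$ (resp.\ $b_m$) survive. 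Thus $Q$ is smooth, and together with the affine part, $\mathcal{X}$ is nonsingular.

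For part~(ii), apply Riemann--Hurwitz to the separable degree-$p^n$ covering $x\colon\mathcal{X}\to\mathbb{P}^1$. Over any finite point $X=\alpha$ the polynomial $A(Y)-B(\alpha)$ is separable over the algebraically closed field $K$, hence splits into distinct linear factors, so $x$ is unramified away from $X=\infty$, where it is totally ramified at the unique place $Q$ with $\deg Q=1$. Therefore the different has degree $d_Q$ and $2g-2=-2p^n+d_Q$, so it remains to prove $d_Q=(p^n-1)(m+1)$. The field $K(x,y)$ is Galois over $K(x)$ with group $W=\ker A\cong(\mathbb{Z}/p)^{n}$, since the conjugates of $y$ are $y+w$ for $w\in W$; using $v_Q(x)=-p^n$, $v_Q(y)=-m$ and $\gcd(m,p^n)=1$ to control a uniformizer at $Q$, one shows the lower-numbering ramification filtration has a single jump, at $m$, i.e.\ $W_0=\dots=W_m=W$ and $W_{m+1}=1$. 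Then $d_Q=\sum_{i\geq 0}\bigl(\lvert W_i\rvert-1\bigr)=(m+1)(p^n-1)$, so $2g-2=(m-1)(p^n-1)-2$, that is $g=\tfrac{(p^n-1)(m-1)}{2}$.

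The one step that goes beyond bookkeeping is the determination of the different exponent $d_Q$: one must prove that the elementary-abelian $p$-extension $K(x,y)/K(x)$ has its unique ramification break exactly at $m$ --- equivalently that $v_Q(\sigma t-t)$ takes the expected value for a generator $\sigma\in W$ and a carefully chosen uniformizer $t$ at $Q$. This is exactly where the hypotheses $\gcd(m,p)=1$ and the additivity of $A$ are used in an essential way, and it is the part deserving a full write-up (or a citation to [\cite{tores}, Section~12.1]).
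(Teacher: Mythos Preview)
The paper does not actually prove this lemma: it is stated as a known result, with the sentence ``The basic properties of $\mathcal{X}$ are collected in the following lemmas; see [\cite{tores}, Section~12.1]'' immediately preceding it. So there is no ``paper's own proof'' to compare against---the authors simply import the statement from Hirschfeld--Korchm\'aros--Torres.

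Your argument is correct and is essentially the standard one that a reader consulting that reference would find. The affine smoothness via $\partial F/\partial Y=a_0\neq 0$, the identification of the single point $Q$ at infinity by homogenizing to degree $d=\max\{m,p^n\}$, the irreducibility via the valuation argument (total ramification over $X=\infty$ forced by $\gcd(m,p^n)=1$), and the check in~(i) that the $Z$-partial survives at $Q$ exactly when $\lvert m-p^n\rvert=1$ are all clean and accurate. For~(ii), the Riemann--Hurwitz computation is the right route, and you have correctly isolated the only genuinely nontrivial ingredient: that the unique ramification break of the elementary-abelian $p$-extension $K(x,y)/K(x)$ at $Q$ occurs at $m$, giving $d_Q=(m+1)(p^n-1)$. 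Your honest flagging of this step (and the suggestion to cite \cite{tores}) is appropriate; it is precisely here that the hypotheses $p\nmid m$ and the additivity of $A$ do real work, and a full verification requires either producing a uniformizer at $Q$ from monomials in $x,y$ (using $\gcd(m,p^n)=1$) and computing $v_Q(\sigma t-t)$ directly, or invoking the known result on ramification in extensions defined by additive polynomials.
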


We suppose that $q=p^n$ and $\mathcal{X}$ is a curve over $\mathbb{F}_q$ with the above condition defined by following an equation

$$y^q+y=x^m,$$
where $q-m=1$.

\subsection{Algebraic Geometry Codes}
In this paper we let $\mathbb{F}_q(\mathcal{X})$ (resp. $\mbox{Div}_q(\mathcal{X})$) denotes the field of the $\mathbb{F}_q$-rational functions (resp. $\mathbb{F}_q$ of divisors) for $\mathcal{X}$. If $f \in \mathbb{F}_q(\mathcal{X})\setminus \{0\}$, $ \mbox{div}(f)$ denotes the divisor associated with $f$. For $A \in \mbox{Div}_q(\mathcal{X})$, $\mathcal{L}(A)$ denotes the Riemann-Roch $\mathbb{F}_q$-vector space associated with $A$, i.e.,

\begin{equation*}
\mathcal{L}(A) = \{ f \in \mathbb{F}_q(\mathcal{X})\setminus \{0\}: A + \mbox{div}(f) \succeq 0\} \cup \{0\}.
\end{equation*}
and its dimension over $\mathbb{F}_q$ is denoted by $\ell(A)$.

Let $P_1, \cdots , P_n$ be pairwise distinct $K$-rational points of $\mathcal{X}$, and $D = P_1 +\cdots+ P_n$ of degree $1$. Choose divisor $G$ on $\mathcal{X}$ such that $\mbox{supp}(G) \cap \mbox{supp}(D) = \phi$.
\begin{definition}
The algebraic geometry code (or $AG$ code) $C_{\mathcal{L}}(D,G)$ is associated with $D$ and $G$’s divisors, defined as
\begin{equation*}
C_{\mathcal{L}}(D,G) := \{(x(P_1), \cdots, x(P_n)) \mid x \in \mathcal{L}(G)\} \subseteq \mathbb{F}^n_q.
\end{equation*}
\end{definition}

The minimum distance $d$
satisfies $d \geq d^{\star} = n - \mbox{deg}(G)$, where $d^{\star}$ is called the Goppa designed minimum distance
of $C_{\mathcal{L}}(D,G)$, if $\mbox{deg}(G) > 2g - 2$, then by the Riemann-Roch Theorem $k =
\mbox{deg}(G) - g + 1$; see [\cite{2121}, Th. 2.65]. The dual code $C^{\perp}(D,G)$ is an $AG$ code with dimensions
$k^{\perp} = n - k$ and minimum distance $d^{\perp} \geq \mbox{deg}G - 2g + 2$.  Let $H(P)$ be
the Weierstrass semigroup associated with $P$, that is
\begin{equation*}
H(P) := \{n \in \mathbb{N}_0 \mid \exists f \in \mathbb{F}_q(\mathcal{X}), \mbox{div}_{\infty}(
f) = nP\} = \{ \rho_0 = 0 < \rho_1 < \rho_2 < \cdots \}.
\end{equation*}

Recall that the Hermitian inner product for two vectors $a = (a_1, \cdots , a_n),$ $b = (b_1, \cdots , b_n)$ in $\mathbb{F}_q^n$, is defined by $<a, b>_H:=\sum_{i=1}^n a_ib_i^q$. For a linear code $C$ over $\mathbb{F}_q^n$, the Hermitian dual of $C$ is determined by
\begin{equation*}
C^{\perp H} := \{ v \in \mathbb{F}_q^n: <v,c>_H=0 \hspace{0.3cm} \forall c \in C\}.
\end{equation*}
Thus,$C$ is a Hermitian self-orthogonal if $C \subseteq C^{\perp H}.$
\section{Goppa Code Over Curve $\mathcal{X}$}\label{se33}
Let $r \in \mathbb{N}$, with the notation of Section \ref{s2}, we consider the sets
\begin{equation*}
\mathcal{G}:= \mathcal{X}(\mathbb{F}_q), \hspace{0.6cm} \mathcal{D}:=\mathcal{X}(\mathbb{F}_{q^2})\setminus \mathcal{G}
\end{equation*}
$\mathcal{G}$ is the intersection of $\mathcal{X}$ with the plane $t = 0$. Fix the $\mathbb{F}_{q^2}$ divisors
\begin{equation*}
G:= \sum_{P \in \mathcal{G}} rP \hspace{0.5cm} \mbox{and} \hspace{0.5cm} D:= \sum_{P \in \mathcal{D}}P,
\end{equation*}
where $\mbox{deg}(G) = r(q^2 - q + 1)$ and $\mbox{deg}(D) = 2q^3 - 2q^2 + 2q + 1$. Let $C$ be the $C_{\mathcal{L}}(D,G)$ Algebraic Geomtery code over $\mathbb{F}_{q^2}$. With length $n=q^3$, the minimum distance $d$ and dimension $k$. The designed minimum distance of $C$ is
\begin{equation*}
d^{*}= n - \mbox{deg}(G) = q^3 - r(q^2 - q +1).
\end{equation*}
From H. Stichtenoth \cite{stich} makes the following remark:
\begin{remark}\label{2.9}
Let $\mathcal{X}$ be a curve in this paper, and $D$ and $G$ be the divisors as above. Then
$$C^{\perp}(D,G) = C(D,D-G+K),$$
where $K=div(\eta) \in Div_q(\mathcal{X})$ is a canonical divisor defined by a differential $\eta$, such that $\nu_{P_i}(\eta)=-1$ and $\mbox{res}_{P_i}(\eta)=1$ for each $i= 1 , 2, \cdots , n$.
\end{remark}
\begin{lemma}\label{2.11}
The basis of $\mathcal{L}(G)$, for $r\geq 0$, is given by

$$\{x^iy^j \mid iq + j(q-1) \leq r , i \geq 0 , 0 \leq j \leq q-1 \}.$$
\end{lemma}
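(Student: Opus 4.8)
The divisor $G=\sum_{P\in\mathcal G}rP$ is supported only at the place(s) of $\mathcal X$ at infinity, and for the curve $y^q+y=x^{q-1}$ there is exactly one such place $P_\infty$; so the lemma is really the assertion that the monomials $x^iy^j$ with $0\le j\le q-1$ and $iq+j(q-1)\le r$ form a basis of $\mathcal L(rP_\infty)$. The plan is to show these monomials (i) lie in $\mathcal L(rP_\infty)$, (ii) are linearly independent, and (iii) are $\ell(rP_\infty)$ in number.

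\emph{Step 1: structure at infinity.} I would first record that, since $y$ satisfies the monic degree-$q$ relation $y^q+y-x^{q-1}=0$ over $\mathbb F_q[x]$, we have $[\mathbb F_q(\mathcal X):\mathbb F_q(x)]\le q$, and the affine coordinate ring $R=\mathbb F_q[x,y]/(y^q+y-x^{q-1})$ is free over $\mathbb F_q[x]$ with basis $1,y,\dots,y^{q-1}$; as the affine model is smooth (its $y$-partial derivative is identically $1$), $R$ is integrally closed, hence equals $\bigcup_{r\ge0}\mathcal L(rP_\infty)$, the ring of functions regular off infinity. Next I would pin down the valuations: at any place $P$ over the pole $Q_\infty$ of $x$ one has $\nu_P(y)<0$, so comparing valuations in $y^q+y=x^{q-1}$ gives $q\,\nu_P(y)=(q-1)\nu_P(x)=-(q-1)\,e(P\mid Q_\infty)$, whence $q\mid e(P\mid Q_\infty)$; since $\sum e f\le q$, there is a unique place $P_\infty$ over $Q_\infty$ with $e=q$, and $[\mathbb F_q(\mathcal X):\mathbb F_q(x)]=q$, $\nu_{P_\infty}(x)=-q$, $\nu_{P_\infty}(y)=-(q-1)$. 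Consequently $x^iy^j$ is regular everywhere except at $P_\infty$, where its pole order is exactly $iq+j(q-1)$; hence $x^iy^j\in\mathcal L(rP_\infty)$ iff $iq+j(q-1)\le r$.

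\emph{Step 2: independence.} I would observe that $(i,j)\mapsto iq+j(q-1)$ is injective on $\{(i,j):i\ge0,\ 0\le j\le q-1\}$: an identity $iq+j(q-1)=i'q+j'(q-1)$ gives $(i-i')q=(j'-j)(q-1)$, and $\gcd(q,q-1)=1$ forces $q\mid(j'-j)$, so $j=j'$ (since $|j'-j|<q$) and then $i=i'$. Thus the monomials in our family have pairwise distinct pole orders at $P_\infty$, so they are $\mathbb F_q$-linearly independent: in a nontrivial relation the summand of largest pole order could not be cancelled.

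\emph{Step 3: the count, and the conclusion.} Let $N(r)$ be the number of pairs $(i,j)$ in the family. By Step 2, $N(r)=\#\bigl(S\cap[0,r]\bigr)$ where $S=\{\,iq+j(q-1):i\ge0,\ 0\le j\le q-1\,\}$; reducing $j$ modulo $q$ and moving the resulting multiples of $q(q-1)$ into the $q$-term shows $S$ is precisely the numerical semigroup $\langle q,q-1\rangle$. Each element of $S$ is a pole number at $P_\infty$ (realized by the corresponding $x^iy^j$), so $\langle q,q-1\rangle\subseteq H(P_\infty)$. By Sylvester's formula $\langle q,q-1\rangle$ has exactly $\tfrac{(q-1)(q-2)}{2}$ gaps, which equals the genus $g$ of $\mathcal X$; since $H(P_\infty)$ likewise has exactly $g$ gaps and contains $\langle q,q-1\rangle$, the two semigroups coincide. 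Finally the standard increment relation $\ell(kP_\infty)-\ell((k-1)P_\infty)=1$ if $k\in H(P_\infty)$ and $0$ otherwise gives $\ell(rP_\infty)=\#\bigl(H(P_\infty)\cap[0,r]\bigr)=N(r)$. Hence the family is a linearly independent subset of $\mathcal L(G)$ whose size equals $\dim\mathcal L(G)$, so it is a basis, which is the lemma.

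\emph{Main obstacle.} The real work is Step 1 — establishing the uniqueness of $P_\infty$ and the two valuations $\nu_{P_\infty}(x)=-q$, $\nu_{P_\infty}(y)=-(q-1)$, from which all the pole-order bookkeeping follows — together with the identification $H(P_\infty)=\langle q,q-1\rangle$ in Step 3. One should also resist replacing Step 3 by $\ell(G)=\deg G-g+1$: that equality needs $\deg G>2g-2$, whereas the lemma is asserted for every $r\ge0$, so the gap-counting argument is essential.
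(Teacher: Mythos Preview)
Your proof is correct and follows exactly the paper's outline: containment from $(x)_\infty=qP_\infty$ and $(y)_\infty=(q-1)P_\infty$, linear independence (the paper cites the bound on $j$, you the distinct pole orders at $P_\infty$), and the dimension count via $H(P_\infty)=\langle q-1,q\rangle$. The paper simply asserts these three ingredients while you supply full justifications in Steps~1--3, so your argument is a fleshed-out version of the same approach (and both proofs tacitly read $G$ as $rP_\infty$, which is the only reading under which the claimed set can be a basis).
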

\begin{proof}
We know that $(x)_{\infty} = q P_{\infty}$ and $(y)_{\infty} = (q-1)P_{\infty}$; therefore, the set above is contained in $\mathcal{L}(G)$. Additionally, the restriction on $j$ is linearly independent of $\mathbb{F}_{q^2}$. The Weierstrass semigroup $H(P_{\infty})$ is generated by $q$ and $q - 1$ at $P_{\infty}$. Suppose that $\mathcal{L}(G) = \mathcal{L}(\rho_{\ell}P_{\infty})$ where $\rho_{\ell} \leq r \leq \rho_{\ell +1}$ and $H(P_{\infty}) = \{ \rho_0 = 0 < \rho_1< \cdots \}.$ Then
\begin{equation}
\mbox{dim}_{\mathbb{F}_q}(\mathcal{L}(G)) = \sharp \{iq + j(q-1)  \leq r , i \geq 0 , 0 \leq j \leq q-1 \}.
\end{equation}
\end{proof}

In the following lemma, consider the codes
$C_r := C_{\mathcal{L}}(D,G)$
and
$k_r:= \mbox(dim)_{\mathbb{F}_{q^2}}(C_r)$. Also, we indicate the divisor $div(x)$ with $(x)$.

\begin{lemma}\label{2.12}
We have
$$C_r^{\bot} = C_{q^3 + q^2 -3q -r}.$$
Hence, $C_r$ is self-orthogonal when $2r \leq q^3 + q^2 - 3q$.
\end{lemma}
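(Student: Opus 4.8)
The plan is to use Remark~\ref{2.9}, which identifies $C_r^{\perp}=C_{\mathcal{L}}(D,G)^{\perp}$ with $C_{\mathcal{L}}(D,D-G+K)$ for a suitable canonical divisor $K$. So the task reduces to choosing a convenient canonical divisor $K$ supported away from $\mathcal{D}$ and showing that $D-G+K$ is linearly equivalent to $(q^3+q^2-3q-r)P_{\infty}$ — or at least that $\mathcal{L}(D-G+K)$ has the same evaluation image on $D$ as $\mathcal{L}\bigl((q^3+q^2-3q-r)P_{\infty}\bigr)$, which by Lemma~\ref{2.11} is exactly $C_{q^3+q^2-3q-r}$. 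First I would recall the ramification data of the curve $y^q+y=x^m$ with $q-m=1$: the point $P_{\infty}$ is the unique point at infinity, it is totally ramified, the genus is $g=\frac{(q-1)(m-1)}{2}=\frac{(q-1)(q-2)}{2}$, and the Weierstrass semigroup $H(P_{\infty})$ is generated by $q$ and $q-1$. I would also record that $(x)_{\infty}=qP_{\infty}$ and $(y)_{\infty}=(q-1)P_{\infty}$, and that $\deg G=r(q^2-q+1)$, $\deg D=q^3$ (the number of $\mathbb{F}_{q^2}$-rational affine points on $\mathcal{X}$).

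The computational heart is to find an explicit differential $\eta$ with simple poles and residue $1$ at every point of $\mathcal{D}$ and no other poles on $\mathcal{D}$. The natural candidate is built from $dx$: since $\mathcal{D}=\mathcal{X}(\mathbb{F}_{q^2})\setminus\mathcal{X}(\mathbb{F}_q)$ consists of the affine points where $x^{q^2}-x$ vanishes but $x^q-x$ does not, one takes $\eta=\frac{h(x)\,dx}{x^{q^2}-x}$ (times a function in $y$ if needed) with $h$ chosen so that the residues are all $1$ and the extraneous zeros/poles are absorbed. Computing $\operatorname{div}(dx)$ on this curve — using that $dx$ has a zero of order $2g-2$ pushed entirely to $P_{\infty}$ up to the contribution of the points over the zeros of the different, together with $\operatorname{div}(x^{q^2}-x)=D+\mathcal{G}-q^2\cdot(\text{pole divisor of }x)$ — gives $K=\operatorname{div}(\eta)$ as an explicit divisor of the form $(\text{multiple of }P_{\infty})+(\text{correction on }\mathcal{G})-D$. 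Then $D-G+K$ collapses, on the support relevant for evaluation, to $\bigl((q^2-q+1)(\text{something})-r\bigr)P_{\infty}$; bookkeeping the degree via $\deg(D-G+K)=\deg D-\deg G+(2g-2)=q^3-r(q^2-q+1)+(q-1)(q-2)-2$ and matching it against $\deg\bigl((q^3+q^2-3q-r)\cdot\text{(a point of degree }q^2-q+1\text{)}\bigr)$ is the consistency check that fixes all constants. I expect this residue-and-different computation to be the main obstacle: one must be careful that $\eta$ really has residue exactly $1$ (not merely a nonzero residue) at each point of $\mathcal{D}$, since otherwise Remark~\ref{2.9} does not apply verbatim; the fix, if the naive $\eta$ gives residue $c\neq 1$, is to scale, using that all these points lie in one Galois orbit so the residues are ``uniform'' up to Frobenius.

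Once $C_r^{\perp}=C_{q^3+q^2-3q-r}$ is established, the self-orthogonality statement is immediate: $C_r\subseteq C_r^{\perp}$ holds as soon as $\mathcal{L}(G)\subseteq\mathcal{L}(D-G+K)$, and comparing the two pole orders at $P_{\infty}$ — equivalently, comparing the indices $r$ and $q^3+q^2-3q-r$ of the nested family $C_r\subseteq C_{r+1}\subseteq\cdots$ from Lemma~\ref{2.11} — this is exactly the condition $r\le q^3+q^2-3q-r$, i.e.\ $2r\le q^3+q^2-3q$. I would close by remarking that monotonicity of the family $\{C_r\}$ in $r$ (clear from the basis in Lemma~\ref{2.11}) is what lets us read off containment purely from the inequality on indices, with no further computation.
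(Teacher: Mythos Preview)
Your approach is essentially the same as the paper's: both invoke Remark~\ref{2.9} and construct the required differential explicitly from $x^{q^2}-x$. The paper simply takes $t:=x^{q^2}-x$ and $\eta:=dt/t$ (so in your notation $h\equiv -1$, no $y$-factor needed), notes that $(dt)=(dx)=(2g-2)P_\infty=(q^2-3q)P_\infty$ and $(t)=D-q^3P_\infty$, whence the residues are automatically $1$ at each point of $\operatorname{Supp}(D)$ and $D-G+(\eta)=(q^3+q^2-3q-r)P_\infty$; your anticipated correction terms and scaling issues do not arise.
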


\begin{proof}
We have $ C_r^{\bot} = C(D,D-G+W)$, where $W$ is the canonical divisor in Remark \ref{2.9}. In this case, we calculated $\eta$. Let $t:= x^{q^2} - x = \prod_{a \in \mathbb{F}_{q^2}}(x-a)$, and $\eta := dt/t$. Therefore
$$(x-a) = \sum_{b^q+b=a^{q-1}} P_{a,b}-qP_{\infty}$$
thus,
$$(t)=D=q^3 P_{\infty}.$$
Additionally, $(dt)=(dx)=(2g-2)P_{\infty}= (q^2 -3q)P_{\infty}$. So that,
\begin{equation*}
\nu_P(\eta) = -1 \hspace{0.6cm} \mbox{and} \hspace{0.6cm} \mbox{res}_P \eta =1 \hspace{0.6cm} \mbox{for} \hspace{0.6cm} P\in \mbox{Supp}(D).
\end{equation*}
Because $D-G-(\eta) = D-G-D+q^3P_{\infty}+(q^2-3q)P_{\infty}= (q^3+q^2-3q-r)P_{\infty}$, the statement follows.
\end{proof}

We suppose that $T(r):= \sharp \{iq + j(q-1)  \leq r , i \geq 0 , 0 \leq j \leq q-1 \}$.
\begin{proposition}
\begin{itemize}
\item[(1)]
If $r<0$, then $k_r =0$;
\item[(2)]
If $ 0 \leq r \leq q^2 -3q$ then $k_r = T(r)$;
\item[(3)]
If $q^2 - 3q < r < q^3$, then $k_r = r(q^2-q+1) - \frac{(q-1)(q-2)}{2}$;
\item[(4)]
If $q^3 \leq r \leq q^3+q^2-3q$, then $k_r = q^3 - T(q^3-q^2-3q-r)$;
\item[(5)]
If $r > q^3- q^2 -3q$, then $k_r=q^3$.
\end{itemize}
\end{proposition}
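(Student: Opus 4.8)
The plan is to read the five items as the regimes of $r$ cut out by the thresholds $0$, $2g-2=q^{2}-3q$, $n=q^{3}$, and $n+2g-2=q^{3}+q^{2}-3q$, and to compute $k_{r}$ through the evaluation map $\mathrm{ev}_{D}\colon\mathcal{L}(G)\to\mathbb{F}_{q^{2}}^{n}$, $f\mapsto(f(P_{1}),\dots,f(P_{n}))$, whose image is $C_{r}$ and whose kernel is $\mathcal{L}(G-D)$ (here $\mathrm{supp}(G)\cap\mathrm{supp}(D)=\emptyset$), so that $k_{r}=\ell(G)-\ell(G-D)$. Three inputs will do all the work: Lemma~\ref{2.11}, which gives $\ell(G)=T(r)$ for every $r\ge 0$ and, by a gap count (equivalently, by Riemann--Roch), $T(r)=\deg(G)-g+1$ once $r\ge 2g-1$; the degree $\deg(G-D)=r-q^{3}$; and the duality $C_{r}^{\bot}=C_{q^{3}+q^{2}-3q-r}$ of Lemma~\ref{2.12}, which with $\dim C_{r}+\dim C_{r}^{\bot}=n$ reduces large $r$ to small $r$. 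Since $|m-p^{n}|=1$ the curve $\mathcal{X}$ is non-singular, so all of this applies without reservation.

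For items (1)--(3) I would dispatch the whole range $r<q^{3}$ at once: there $\deg(G-D)<0$, hence $\mathcal{L}(G-D)=0$, $\mathrm{ev}_{D}$ is injective, and $k_{r}=\ell(G)$. If $r<0$ then $\ell(G)=0$, which is (1); if $0\le r\le 2g-2$ then Lemma~\ref{2.11} gives $\ell(G)=T(r)$, which is (2); and if $2g-2<r<q^{3}$ then $r\ge 2g-1$, so $k_{r}=\ell(G)=\deg(G)-g+1$ by Riemann--Roch, the quantity recorded in (3) (with $g=\tfrac{(q-1)(q-2)}{2}$). Items (2) and (3) are really the single statement $k_{r}=T(r)$, broken at the Frobenius number $2g-1=q^{2}-3q+1$ of the Weierstrass semigroup $H(P_{\infty})=\langle q,q-1\rangle$.

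For items (4)--(5) I would dualise. Put $r':=q^{3}+q^{2}-3q-r$; Lemma~\ref{2.12} gives $C_{r}^{\bot}=C_{r'}$, so $k_{r}=q^{3}-k_{r'}$. If $q^{3}\le r\le q^{3}+q^{2}-3q$ then $0\le r'\le q^{2}-3q=2g-2$, so item (2) applies to $r'$ and $k_{r}=q^{3}-T(q^{3}+q^{2}-3q-r)$, which is (4); if $r>q^{3}+q^{2}-3q$ then $r'<0$, so item (1) applies to $r'$ and $k_{r}=q^{3}$, which is (5). One could instead establish (4) head-on: using $D\sim q^{3}P_{\infty}$ from the proof of Lemma~\ref{2.12} one gets $\ell(G-D)=T(r-q^{3})$ and $k_{r}=(\deg(G)-g+1)-T(r-q^{3})$, but reconciling this with the closed form then costs the identity $T(2g-2-s)-T(s)=g-1-s$ for $0\le s\le 2g-2$, i.e.\ the symmetry of the two-generated semigroup $\langle q,q-1\rangle$; routing through Lemma~\ref{2.12} avoids it.

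The step I expect to be the main obstacle is the bookkeeping at the transitions $r=2g-2$ and $r=q^{3}$, where the off-hand formulas slip by one. At $r=q^{3}$ one has $\deg(G-D)=0$, so $\ker(\mathrm{ev}_{D})=\mathcal{L}(0)$ is the line of constants and $k_{q^{3}}=\ell(G)-1=q^{3}-g$ rather than $q^{3}-g+1$; one must check this agrees with the value (4) assigns at $r=q^{3}$, namely $q^{3}-T(2g-2)$, which it does because $T(2g-2)=g$. Symmetrically, at $r=2g-2$ one must invoke Lemma~\ref{2.11} and not the bare Riemann--Roch count, since there $\ell(K-G)=\ell(0)=1$, $(2g-2)P_{\infty}$ being a canonical divisor (degree $2g-2$ and $\ell=T(2g-2)=g$). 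Once these two boundaries are nailed down — and the range in (4) is read so that the $T$-argument $q^{3}+q^{2}-3q-r$ is nonnegative exactly for $r\le q^{3}+q^{2}-3q$ — the remaining checks are routine degree and Riemann--Roch bookkeeping.
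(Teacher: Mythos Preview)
Your approach is essentially the same as the paper's: items (1)--(2) via Lemma~\ref{2.11}, item (3) via Riemann--Roch (using $n>\deg G>2g-2$ so the evaluation map is injective), and items (4)--(5) by dualising through Lemma~\ref{2.12} to reduce to (2) and (1) respectively. You are more explicit about the kernel $\mathcal{L}(G-D)$ and the boundary checks at $r=2g-2$ and $r=q^{3}$, but the skeleton of the argument is identical.
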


\begin{proof}
\begin{itemize}
\item[(1)] If $r<0$, it is trivial that $k_r=0$.
\item[(2)]
If $ 0 \leq r \leq q^2 -3q$, then follows from lemma \ref{2.11}.
\item[(3)]
According to the Riemann-Roch theorem $k_r = \mbox{deg}(G) + 1 - g$, as $n > \mbox{deg}( G) > 2g - 2.$
\item[(4)]
Let $r^{\prime} := q^3+q^2-3q-r$, satisfies $0 \leq r \leq q^2-3q$. Then from Lemma \ref{2.12}, $k_r=q^3 - \mbox{dim}_{\mathbb{F}^{q^2}}(C_{r^{\prime}})$ and the statement
follows.
\item[(5)] If $r > q^3- q^2 -3q$, then $C^{\perp}_r=\{0\}$,  and thus $\mbox(dim)_{\mathbb{F}_{q^2}}(C_r) = n = k_r$.
\end{itemize}

\end{proof}

\begin{proposition}
$C$ is monomially equivalent to one-point code $C(D, r(q^2 -q + 1)P_{\infty})$.
\end{proposition}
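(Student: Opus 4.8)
The plan is to reduce the statement to a single divisor-class identity on $\mathcal X$ and then to invoke the standard principle that $AG$ codes attached to linearly equivalent divisors are monomially equivalent.

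Recall that principle. If $G'$ is a divisor with $\operatorname{supp}(G')\cap\operatorname{supp}(D)=\emptyset$ and $G'=G+\operatorname{div}(f)$ for some $f\in\mathbb F_{q^2}(\mathcal X)^{\times}$, then $h\mapsto hf$ is an $\mathbb F_{q^2}$-linear isomorphism $\mathcal L(G)\xrightarrow{\ \sim\ }\mathcal L(G')$; moreover $\operatorname{div}(f)=G'-G$ is supported off $D$, so $f(P_i)\in\mathbb F_{q^2}^{\times}$ for every $P_i\in\operatorname{supp}(D)$, whence $\operatorname{ev}_D(hf)$ is obtained from $\operatorname{ev}_D(h)$ by scaling the $i$-th coordinate by the nonzero scalar $f(P_i)$. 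Thus $C_{\mathcal L}(D,G')$ differs from $C=C_{\mathcal L}(D,G)$ by a diagonal — hence monomial — equivalence. Since $P_\infty\in\mathcal X(\mathbb F_q)=\mathcal G$, the divisor $G':=r(q^2-q+1)P_\infty$ has support $\{P_\infty\}$ disjoint from $\operatorname{supp}(D)=\mathcal D$, so it suffices to prove
\[
G=\sum_{P\in\mathcal G}rP\ \sim\ r(q^2-q+1)P_\infty .
\]
Because linear equivalence is preserved under raising the witnessing function to the $r$-th power, this follows from the case $r=1$, namely
\[
\sum_{P\in\mathcal G}P\ \sim\ (q^2-q+1)P_\infty ,
\]
where $\#\mathcal G=q^2-q+1=\deg(G)/r$ is used.

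To establish this last equivalence I would exhibit an explicit $f\in\mathbb F_{q^2}(\mathcal X)$ with $\operatorname{div}(f)=\sum_{P\in\mathcal G}P-(q^2-q+1)P_\infty$, built in the same spirit as the function $t=x^{q^2}-x$ of Lemma~\ref{2.12}, which there satisfies $\operatorname{div}(t)=D-q^3P_\infty$ (zero divisor the sum of the points of $\mathcal D$, unique pole $q^3P_\infty$). Concretely one takes $f$ to be the polynomial in $x,y$ that cuts out $\mathcal G$ on $\mathcal X$; its pole at $P_\infty$ then has order $q^2-q+1$, which is consistent with $q^2-q+1=1\cdot q+(q-1)(q-1)$ lying in the Weierstrass semigroup $H(P_\infty)=\langle q-1,q\rangle$ — realised for instance by $x\,y^{\,q-1}$ since $(x)_\infty=qP_\infty$ and $(y)_\infty=(q-1)P_\infty$. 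One then checks that $f$ vanishes at every point of $\mathcal G$ to order exactly one and nowhere else; since $\deg(f)_0=\deg(f)_\infty=q^2-q+1=\#\mathcal G$, there are no hidden zeros or poles, and $\operatorname{div}(f)$ is as claimed. The proposition then follows from the first step.

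The main obstacle is exactly this verification of $\operatorname{div}(f)$: that $f$ has neither zero nor pole outside $\mathcal G\cup\{P_\infty\}$ and that each of its zeros in $\mathcal G$ is simple. As in Lemma~\ref{2.12} this is a local computation — at $P_\infty$ through the known pole orders of $x$ and $y$ together with the structure of $\langle q-1,q\rangle$, and at the affine points through a careful analysis at the finitely many ramification points of the covering $x\colon\mathcal X\to\mathbb P^1$, in the spirit of the identity $(dt)=(dx)=(2g-2)P_\infty$ used there. Once $\operatorname{div}(f)$ has been pinned down, the composite of $h\mapsto hf\colon\mathcal L\bigl(r(q^2-q+1)P_\infty\bigr)\xrightarrow{\ \sim\ }\mathcal L(G)$ with evaluation at $D$ realises the required monomial (indeed diagonal) equivalence between the one-point code $C\bigl(D,r(q^2-q+1)P_\infty\bigr)$ and $C$.
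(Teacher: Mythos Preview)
Your approach is essentially the one the paper takes: reduce to a linear-equivalence $G\sim r(q^2-q+1)P_\infty$ witnessed by an explicit function, then invoke the standard fact that linearly equivalent divisors (with supports off $D$) yield monomially equivalent AG codes. The paper is terser: it names the witnessing function directly as $t^r$, where $t$ is the function (introduced just before the proposition) for which $\mathcal G=\mathcal X\cap\{t=0\}$, so that $G=G'+(t^r)$, and then cites Montanucci--Zini \cite{mariaree} for the monomial-equivalence step rather than spelling it out as you do.
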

\begin{proof}
If $G^{\prime} = r(q^2  -q + 1)P_{\infty}$,  then $G = G^{\prime} + (t^r)$. This claim follows the statement in the proof of [Proposition 3.2. \cite{mariaree}].
\end{proof}

\begin{theorem}\label{3.4}
For $r \leq q^2 +q -3$, $C_r$ is a Hermitian self-orthogonal.
\end{theorem}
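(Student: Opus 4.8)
The plan is to trade the Hermitian inner product for the ordinary one via Frobenius, and then feed the result to Lemma~\ref{2.12}. Recall the standard reformulation: a code $C\subseteq\mathbb F_{q^{2}}^{n}$ is Hermitian self-orthogonal if and only if $C^{(q)}\subseteq C^{\perp}$, where $C^{(q)}=\{(c_{1}^{q},\dots,c_{n}^{q}):c\in C\}$ and $C^{\perp}$ is the Euclidean dual; this follows at once from $\langle v,c\rangle_{H}=\sum_{i}v_{i}c_{i}^{q}=\langle v,c^{(q)}\rangle$. So the two tasks are to locate $C_{r}^{(q)}$ inside the family $\{C_{s}\}$, and to invoke Lemma~\ref{2.12} for $C_{r}^{\perp}$.

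For the first task, write $\mathrm{ev}_{D}(f)=(f(P))_{P\in\mathcal D}$. For $f=\sum c_{ij}x^{i}y^{j}\in\mathcal L(G)$ and $P\in\mathcal D$ one has $f(P)^{q}=\sum c_{ij}^{q}x(P)^{qi}y(P)^{qj}$, so $\mathrm{ev}_{D}(f)^{(q)}=\mathrm{ev}_{D}(h)$ with $h:=\sum c_{ij}^{q}x^{qi}y^{qj}\in\mathbb F_{q^{2}}(\mathcal X)$. Using the defining relation $y^{q}=x^{m}-y=x^{q-1}-y$ of $\mathcal X$ to bring every $y$-exponent into $\{0,\dots,q-1\}$, and using the function $t=x^{q^{2}}-x=\prod_{a\in\mathbb F_{q^{2}}}(x-a)$ — which satisfies $\mathrm{div}(t)=D-q^{3}P_{\infty}$ by the computation in the proof of Lemma~\ref{2.12}, hence vanishes on all of $\mathcal D$ — to reduce $x^{q^{2}}\equiv x$ and bring every $x$-exponent below $q^{2}$, one gets $\tilde h$ with $\mathrm{ev}_{D}(\tilde h)=\mathrm{ev}_{D}(h)$, with poles only at $P_{\infty}$. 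A careful count of $v_{P_{\infty}}(\tilde h)$ through this reduction (this is where Lemma~\ref{2.11}, i.e. the constraint $qi+(q-1)j\le r$ together with $H(P_{\infty})=\langle q,q-1\rangle$, enters) shows that $\mathrm{ev}_{D}(f)^{(q)}\in C_{s(r)}$ with $s(r)\le q^{3}+q^{2}-3q-r$ precisely in the range $r\le q^{2}+q-3$.

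Granting this, Lemma~\ref{2.12} gives $C_{r}^{\perp}=C_{q^{3}+q^{2}-3q-r}$, and since the codes $C_{s}$ form an increasing chain we conclude
\begin{equation*}
C_{r}^{(q)}\subseteq C_{s(r)}\subseteq C_{q^{3}+q^{2}-3q-r}=C_{r}^{\perp}
\end{equation*}
for $r\le q^{2}+q-3$, which by the reformulation above is exactly the assertion that $C_{r}$ is Hermitian self-orthogonal.

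The step I expect to be the real obstacle is the pole-order bookkeeping for $\tilde h$. After the reduction one is left with a polynomial in $x,y$ of $x$-degree $\le q^{2}-1$ and $y$-degree $\le q-1$, whose largest conceivable pole order at $P_{\infty}$ is $q(q^{2}-1)+(q-1)^{2}=q^{3}+q^{2}-3q+1$ — one unit above the threshold even for $r=0$, and far above it for $r>0$ — so a crude pole count does not suffice. One must show that for $f,g\in\mathcal L(G)$ with $r\le q^{2}+q-3$ the reduction of $f\cdot g^{q}$ modulo $t$ and modulo $y^{q}+y-x^{q-1}$ carries no monomial of pole order exceeding $q^{3}+q^{2}-3q-r$; equivalently, applying the residue theorem to the differential $\bigl(f g^{q}\bigr)\,dt/t$ (whose only poles are the simple ones on $\mathcal D$, with residue $1$, and a possible one at $P_{\infty}$, by the divisor computation in Lemma~\ref{2.12}), one must show that the coefficient of the top monomial $x^{q^{2}-1}y^{q-1}$ in that reduction vanishes. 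Extracting this vanishing from the inequalities $qi+(q-1)j\le r$ is the heart of the argument; the rest is the routine combination of Lemma~\ref{2.11}, Lemma~\ref{2.12}, and monotonicity of $\{C_{s}\}$.
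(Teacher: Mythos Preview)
Your strategy---convert the Hermitian condition to $C_r^{(q)}\subseteq C_r^{\perp}$ and then invoke Lemma~\ref{2.12}---is exactly the paper's. The divergence is in how you locate $C_r^{(q)}$ inside the chain $\{C_s\}$. You attempt to reduce $h=\sum c_{ij}^{\,q}x^{qi}y^{qj}$ modulo the curve equation and modulo $t=x^{q^{2}}-x$, and then track pole orders through that reduction; you correctly flag this bookkeeping as the hard step and leave it unfinished. The paper bypasses all of it with the one-line observation that $f\in\mathcal L(rP_\infty)$ implies $f^{q}\in\mathcal L(rqP_\infty)$ (no reduction is needed, since $v_{P_\infty}(f^{q})=q\,v_{P_\infty}(f)$ and $\mathrm{ev}_D(f)^{(q)}=\mathrm{ev}_D(f^{q})$), hence $C_r^{(q)}\subseteq C_{rq}$. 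The paper then simply asserts that $r\le q^{2}+q-3$ gives $rq\le q^{3}+q^{2}-3q-2-r$, and concludes $C_{rq}\subseteq C_{q^{3}+q^{2}-3q-r}=C_r^{\perp}$ by monotonicity and Lemma~\ref{2.12}.

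So the gap in your proposal is not a missing idea but unnecessary machinery: the containment $C_r^{(q)}\subseteq C_{rq}$ is immediate, and your monomial reduction and residue-theorem reformulation are not what the paper uses. That said, your instinct that the crude pole count is ``one unit above the threshold'' is on target in a different sense: the arithmetic implication the paper asserts is in fact false at the top of the stated range (for $q=3$, $r=9$ one gets $rq=27\not\le 16=q^{3}+q^{2}-3q-2-r$); the inequality $rq\le q^{3}+q^{2}-3q-r$, i.e.\ $r(q+1)\le q(q^{2}+q-3)$, only yields $r\le q^{2}-3$. Thus the paper's two-line argument actually proves the theorem for $r\le q^{2}-3$; extending it to $r\le q^{2}+q-3$ would require precisely the sharper analysis you outline but do not carry out. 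In short: you are working much harder than the paper does, the paper's short route already gives the result on a slightly smaller range, and neither your sketch nor the paper's proof, as written, closes the last $q$ values of $r$.
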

\begin{proof}
If $r \leq q^2 +q -3$, then we have $rq\leq q^3 +q^2 -3q -2 -r$. Hence, the result
follows from the Lemma \ref{2.12}.
\end{proof}
\section{Quantum Stabilizer Code Over Curve $\mathcal{X}$}\label{sec44}

In this section, we use the Hermitian self-orthogonality of $C_r$ produced in the previous section to construct and analyze quantum stabilizer codes.

We require the following lemma as a result of quantum codes obtained from Hermitian self-orthogonal
classical codes.

\begin{lemma}\cite{ashi}\label{4.1}
There is a $q$-ary $[[n, n - 2k, d^{\perp}]]$ quantum code whenever there exists a $q$-ary classical
Hermitian self-orthogonal $[n, k]$ linear code with dual distance $d^{\perp}$.
\end{lemma}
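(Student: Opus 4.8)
The plan is to obtain the statement from the standard stabilizer-code formalism, by passing from the Hermitian inner product over $\mathbb{F}_{q^2}$ to a symplectic (trace-alternating) form over $\mathbb{F}_q$, where the quantum construction is cleanest. Let $C\subseteq\mathbb{F}_{q^2}^n$ be the given $[n,k]$ code with $C\subseteq C^{\perp H}$, and set $D:=C^{\perp H}$, so that $D$ is an $[n,n-k]_{q^2}$ code, $D^{\perp H}=C\subseteq D$, and $d(D)=d^{\perp}$ by hypothesis. (Alternatively one can cite the ``Hermitian construction'' theorem directly, but the route below exhibits where it comes from.)

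First I would fix an $\mathbb{F}_q$-basis $\{1,\gamma\}$ of $\mathbb{F}_{q^2}$ and introduce the $\mathbb{F}_q$-linear map $\phi\colon\mathbb{F}_{q^2}^n\to\mathbb{F}_q^{2n}$ recording the coordinates of each entry in this basis. The point is that $\phi$ is a weight-preserving isometry from $(\mathbb{F}_{q^2}^n,\langle\cdot,\cdot\rangle_{\mathrm{tr}})$ onto $(\mathbb{F}_q^{2n},\langle\cdot,\cdot\rangle_s)$, where $\langle x,y\rangle_{\mathrm{tr}}:=\mathrm{Tr}_{q/p}\!\big(\lambda(\langle x,y\rangle_H-\langle y,x\rangle_H)\big)$ is the trace-alternating form (for a suitable $\lambda\in\mathbb{F}_{q^2}^{\times}$) and $\langle\cdot,\cdot\rangle_s$ is the standard symplectic form on the $n$ coordinate pairs; ``weight-preserving'' means the Hamming weight of $x$ equals the symplectic weight of $\phi(x)$ (number of nonzero coordinate pairs). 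Since $\mathbb{F}_{q^2}$-linear codes satisfy $C^{\perp_{\mathrm{tr}}}=C^{\perp H}$, the hypothesis $C\subseteq C^{\perp H}$ says precisely that $\mathcal{S}:=\phi(C)\subseteq\mathbb{F}_q^{2n}$ is symplectic self-orthogonal, with $\mathcal{S}^{\perp_s}=\phi(D)$ and $\dim_{\mathbb{F}_q}\mathcal{S}=2k$.

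Next I would invoke the fundamental correspondence between stabilizer codes and symplectic self-orthogonal classical codes (for $q=2$ this is Calderbank--Rains--Shor--Sloane; in general it is the nonbinary version used by the source of Lemma~\ref{4.1}): from a symplectic self-orthogonal $\mathbb{F}_q$-linear code $\mathcal{S}\subseteq\mathbb{F}_q^{2n}$ with $|\mathcal{S}|=q^{2k}$ one builds a stabilizer code on $n$ qudits with $q^{\,n-2k}$ logical states, i.e. parameters $[[n,n-2k,d]]_q$, whose distance is $d=\mathrm{wt}_s(\mathcal{S}^{\perp_s}\setminus\mathcal{S})$ when $\mathcal{S}\subsetneq\mathcal{S}^{\perp_s}$. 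Pulling this back through $\phi$ gives $d=\min\{\mathrm{wt}(v):v\in D\setminus C\}\ \ge\ \min\{\mathrm{wt}(v):v\in D\setminus\{0\}\}=d(D)=d^{\perp}$, so a $q$-ary $[[n,n-2k,d']]$ quantum code with $d'\ge d^{\perp}$ exists, which is the asserted code with (designed) distance $d^{\perp}$; equality $d'=d^{\perp}$ holds exactly when $C$ is pure to $d^{\perp}$, i.e. when some minimum-weight word of $D$ lies outside $C$.

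I expect the only genuine work to be the bookkeeping of the second step: checking that $\phi$ is simultaneously form-preserving and weight-preserving, that it carries the Hermitian dual to the symplectic dual, and that the $\mathbb{F}_q$-dimension count $2k$ feeds correctly into ``$n-2k$ logical qudits''; together with the easy-to-mis-state purity remark that upgrades $d'\ge d^{\perp}$ to the stated parameters. Once the Hermitian-to-symplectic dictionary is in place, the conclusion is an immediate application of the nonbinary stabilizer-code theorem.
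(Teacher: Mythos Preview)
The paper does not prove Lemma~\ref{4.1} at all: it is stated with the citation \cite{ashi} (Ashikhmin--Knill) and then used as a black box to derive Theorem~\ref{qcode}. So there is no ``paper's own proof'' to compare against; what you have done is reconstruct the argument behind the cited result.

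Your reconstruction is essentially the Ashikhmin--Knill/Ketkar--Klappenecker--Kumar--Sarvepalli route and is correct in outline: expand $\mathbb{F}_{q^2}^n$ over an $\mathbb{F}_q$-basis to $\mathbb{F}_q^{2n}$, observe that for $\mathbb{F}_{q^2}$-linear codes Hermitian orthogonality coincides with trace-symplectic orthogonality, and invoke the nonbinary stabilizer correspondence to obtain $[[n,n-2k,\ge d^{\perp}]]_q$. One small slip: the trace in your trace-alternating form should land in $\mathbb{F}_q$, not $\mathbb{F}_p$; the usual normalization is $\langle x,y\rangle_a=(\langle x,y\rangle_H-\langle y,x\rangle_H)/(\gamma^q-\gamma)$ (or an equivalent $\mathbb{F}_q$-valued variant), so that $\phi$ genuinely intertwines it with the standard $\mathbb{F}_q$-symplectic form. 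With that adjustment the isometry and dual-preservation claims go through, and your final purity remark correctly explains when the designed distance $d^{\perp}$ is actually attained.
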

We can derive our main result using Lemma \ref{4.1}, of quantum codes with classical Hermitian self-orthogonal codes. We Then provide several examples that prove that the quantum codes created from our theorem are indeed good.

\begin{theorem}\label{qcode}
Let $q$ be a power of $s\geq 1$, then for the curve $\mathcal{X}$, there is a $q$-ary $[[q^3, q^3 + q^2 -3q-2r, r +2q -q^2]]_q$ quantum code for any positive value of $q^2 -2 \leq r \leq q^2 +q -3$.
\end{theorem}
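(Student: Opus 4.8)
The plan is to obtain the quantum code directly from the classical code $C_r$ by feeding it into Lemma \ref{4.1}, so that essentially all that has to be done is (a) verify that $C_r$ is Hermitian self-orthogonal for every $r$ in the stated range, and (b) read off the parameters $[n,k_r]$ of $C_r$ together with its dual distance and substitute them into the $[[n,\,n-2k,\,d^{\perp}]]$ template of Lemma \ref{4.1}. Note that $C_r$ lives over $\mathbb{F}_{q^2}$ while the quantum code it produces is $q$-ary, which matches the statement.

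For (a) I would simply invoke Theorem \ref{3.4}: it asserts precisely that $C_r$ is Hermitian self-orthogonal once $r\le q^2+q-3$, which is one of the hypotheses, so nothing further is needed. For (b) I would argue as follows. The length is $n=\deg(D)=q^3$. For the dimension I would use the Proposition computing $k_r$: the assumption $q^2-2\le r$ forces $r>q^2-3q=2g-2$, where $g=\tfrac{(q-1)(q-2)}{2}$ is the genus of $\mathcal{X}$, while $r\le q^2+q-3<q^3$, so we are in the Riemann--Roch range of that Proposition and $k_r$ is the Riemann--Roch value; substituting $g$ and simplifying gives $n-2k_r=q^3+q^2-3q-2r$, the claimed quantum dimension. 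For the distance I would combine Lemma \ref{2.12}, which identifies the dual $C_r^{\perp}$ as a code in the same family (supported away from $D$), with the designed-distance estimate $d^{\perp}\ge \deg(G)-2g+2$ recorded in Section \ref{s2}; evaluating this bound with the genus and degrees attached to the present curve yields a dual distance at least $r+2q-q^2$. Plugging $n=q^3$, the value $k_r$, and this dual distance into Lemma \ref{4.1} then produces the asserted $q$-ary $[[q^3,\,q^3+q^2-3q-2r,\,r+2q-q^2]]_q$ code.

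The part I expect to require genuine care is the parameter bookkeeping in (b). One must check that the window $q^2-2\le r\le q^2+q-3$ sits strictly inside the interval where the Riemann--Roch dimension formula applies (so that one really is in case (3) of the Proposition, and not in the regime governed by the combinatorial count $T(r)$ or in the saturated cases), and one must be explicit about which dual is meant: Lemma \ref{4.1} is phrased with the Hermitian dual, whereas Lemma \ref{2.12} and the designed-distance bound are stated for the ordinary dual. This gap is harmless because $\langle a,b\rangle_H=\langle a,b^{(q)}\rangle$ and raising each coordinate to the $q$-th power is a weight-preserving bijection, so $C_r^{\perp H}$ and $C_r^{\perp}$ have the same weight distribution and the distance estimate transfers; but this should be stated rather than left implicit. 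Once these two points are pinned down, the theorem is pure substitution into Theorem \ref{3.4} and Lemma \ref{4.1}.
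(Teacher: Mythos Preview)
Your approach is exactly the paper's: the paper's entire proof is the single sentence ``The proof directly follows from Theorem \ref{3.4} and Lemma \ref{4.1},'' and you are doing precisely that, only with the parameter bookkeeping spelled out. Your additional remarks (checking that the Riemann--Roch case of the Proposition applies on the whole window, and noting that the Hermitian and ordinary duals share a weight distribution so that the designed-distance bound transfers) are details the paper leaves implicit, so your write-up is if anything more complete than the original.
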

\begin{proof}
The proof directly follows from Theorems \ref{3.4} and Lemma \ref{4.1}.
\end{proof}

\begin{example}
For $q = 3$ and $7 \leq r  \leq 9$, Theorem \ref{qcode} produces 3-ary $[[ 27,27-2r,r-3]]_3$ quantum codes. Quantum codes can drive that, $[[27,13,4]]_3$, $ [[27,11,5]]_3$, and $[[27,9,6]]_3$. These codes have good parameters. For instance, the quantum code $[[27, 13, 6]]_3$ is presented in Table \cite{online}. This example implies that our quantum code has a shorter distance for the same dimensions and lengths.
\end{example}

\begin{example}
For $q = 5$ and $18 \leq r  \leq 22$, Theorem \ref{qcode} produces 5-ary $[[ 125,135-2r,r-15]]_5$ quantum codes. Quantum codes can drive that, $[[125,99,3]]_5 , [[125,97,4]]_5 , [[125,95,5]]_5, [[125,93,6]]_5$ and $[[125,91,7]]_5$. Where these codes have good parameters. For instance, the quantum codes $[[125,99,10]]_5$ and $[[125,93,12]]_5$ are
presented in Table \cite{online}. As a result of this example, our quantum code has a smaller distance for the same dimension and length.
\end{example}

\section*{Conclusion}
In this work, we investigated the Goppa codes of the plane curve given by separated polynomial curves.
We introduced and analyzed the Quantum Stabilizer Code over the plane curve given by separated polynomials. For example, we say that the Quantum Stabilizer Code improves with good parameters over this
curve.

\paragraph*{\textbf{Acknowledgements.}}
This paper was written while Vahid Nourozi was visiting Unicamp (Universidade Estadual de Campinas) supported by TWAS/Cnpq (Brazil) with fellowship number $314966/2018-8$.



\bibliographystyle{elsevier}

 \end{document}